\newcommand{\blind}{0}
\definecolor{codegreen}{rgb}{0,0.6,0}
\definecolor{codegray}{rgb}{0.5,0.5,0.5}
\definecolor{codepurple}{rgb}{0.58,0,0.82}
\definecolor{backcolour}{rgb}{0.95,0.95,0.92}
\lstdefinestyle{mystyle}{
    backgroundcolor=\color{backcolour},   
    commentstyle=\color{codegreen},
    keywordstyle=\color{magenta},
    numberstyle=\tiny\color{codegray},
    stringstyle=\color{codepurple},
    basicstyle=\ttfamily\footnotesize,
    breakatwhitespace=false,         
    breaklines=true,                 
    captionpos=b,                    
    keepspaces=true,                 
    numbers=left,                    
    numbersep=5pt,                  
    showspaces=false,                
    showstringspaces=false,
    showtabs=false,                  
    tabsize=2
}
\algnewcommand\algorithmicswitch{\textbf{switch}}
\algnewcommand\algorithmiccase{\textbf{case}}
\algnewcommand\algorithmicassert{\texttt{assert}}
\algnewcommand\Assert[1]{\State \algorithmicassert(#1)}%
\definecolor{blue}{rgb}{0,0,0.9}
\definecolor{red}{rgb}{0.9,0,0}
\definecolor{green}{rgb}{0,0.9,0}
\definecolor{gray}{gray}{0.6}
\newtheorem{proposition}{Proposition}
\newtheorem{theorem}{Theorem}
\def\lam{\lambda} \def\alp{\alpha}
\def\sig{\sigma}
\def\inprod#1#2{\langle#1,#2\rangle}
\newcommand{\R}{\mathbb R}
\newcommand{\primobj}{\text{obj}_\text{primal}}
\newcommand{\dualobj}{\text{obj}_\text{dual}}
\def\diag#1{{\rm diag}(#1)}
\def\norm#1{\|#1\|}
\def\cT{{\cal T}}
\def\bfone{{\bf 1}}
\begin{document}

\def\spacingset#1{\renewcommand{\baselinestretch}%
{#1}\small\normalsize} \spacingset{1.45}


\if0\blind
{
  \title{\bf A Foray into Parallel Optimisation Algorithms for High Dimension Low Sample Space (HDLSS) Generalized Distance Weighted Discrimination problems}
	\author{
		Srivathsan Amruth\thanks{Department of Mathematics, National
			University of Singapore,
			10 Lower Kent Ridge Road, Singapore
			119076.}, \;
		Advisor: Xin Yee Lam\thanks{Department of Mathematics, National
			University of Singapore,
			10 Lower Kent Ridge Road, Singapore
			119076.}, \;
	}
  \maketitle
} \fi

\if1\blind
{
  \begin{center}
    {\large \bf UROPS report for DWD on HDLSS data}
\end{center}
} \fi
\begin{abstract}
\begin{scriptsize}
In many modern data sets, High dimension low sample size (HDLSS) data is prevalent in many fields of studies. There has been an increased focus recently on using machine learning and statistical methods to mine valuable information out of these data sets. Thus, there has been an increased interest in efficient learning in high dimensions. Naturally, as the dimension of the input data increases, the learning task will become more difficult, due to increasing computational and statistical complexities. This makes it crucial to overcome the curse of dimensionality in a given dataset, within a reasonable time frame, in a bid to obtain the insights required to keep a competitive edge. To solve HDLSS problems, classical methods such as support vector machines can be utilised to alleviate data piling at the margin. However, when we question geometric domains and their assumptions on input data, we are naturally lead to convex optimisation problems and this gives rise to the development of solutions like distance weighted discrimination (DWD), which can be modelled as a second-order cone programming problem and solved by interior-point methods when sample size and feature dimensions of the data is moderate. In this paper, our focus is on designing an even more scalable and robust algorithm for solving large-scale generalized DWD problems.
\end{scriptsize}
\end{abstract}

\section{Introduction}

First we will begin by contextualising what is meant by dimension, $d$ and sample size, $n$, in a data set and how we approach solving this problem. Consider the following problem:

$\makebox[\linewidth]{\emph{You are given a few drinks and have to determine which are wine or a beer.}}
$

A few suggested ways to determine the difference would maybe be that of the colour, the taste, the smell etc. These suggested ways are alternatively known as features. If we mapping each of these features to an axis on a graph we can then generate a hypercube of possibilities. Following which if we use continuous and discrete mappings of these axises within the hypercube to place each drink, alternatively known as each sample, we can generate a point cloud of information.

To then make sense of this information we can utilise many techniques. A classically famous example is that of Support Vector Machine(SVM) where we use a hyperplane to slice the point cloud of information based on clustering information to draw a "line" between where beer exists and wine exists in our information space (hypercube).

Extending this idea is main goal of this paper where we tackle the specific problem of a low sample size (sparse point cloud) but high dimension (many axis) data. This is in line with the "Curse of dimensionality problem" where increasing dimensionality results in rapid expansion of volume encapsulated by the hypercube implying even increased data sparseness. 

Quantifying this, our paper aims to specifically solve a problem with sample size $n \approx 10^4$--$10^6$ and/or the dimension $d\approx 10^4$--$10^5$. This is done by extending the Distance Weighted Discrimination approach and aiming to solve large scale DWD problems by designing new methods with the inspiration from existing DWD implementation algorithms.

To understand how this is possible, we first need to understand the history of ADMM where classical ADMM was initially proposed for solving a 2-block convex optimization problem with a collection of coupling linear constraints. Following this, there has been a rise of new and great variety of optimization problems over time. Naturally, to solve these new problems, there have been many variations of ADMM postulated and created. 

\pagebreak

One intuitive transformation that we suppose most researchers would come up with is to extend the two-block to multi-block settings. However, in \cite{directADMM}, it was shown that the directly extended ADMM may not be convergent. This is obviously then an inherent problem as convergence is key to solving any numerical optimisation problem. Thus, it is necessary to make some modifications when directly extended ADMM in order to
get  a convergent algorithm.

Thanks to the recent advances in convergent multi-block ADMM-type methods (\citealt{STY,LST,CST}) for solving convex composite quadratic conic programming problems, we generated a novel convergent 3-block semi-proximal alternating direction method of multipliers(ADMM), which is a extension of the inexact sGS-ADMM algorithm designed in \cite{CST} to solve the DWD model.

The first contribution we make is in reformulating the primal formulation of the {\em generalized} DWD model (using the
terminology from \cite{WangZou}) and adapting the
powerful inexact sGS-ADMM framework for solving the  reformulated problem.This is in contrast to numerous SVM algorithms which are primarily designed for solving
the dual formulation of the SVM model.

The second contribution we make is in designing highly efficient techniques to
solve the subproblems in each  of the inexact sGS-ADMM iterations.
If $n$ or $d$ is moderate, then the complexity at each iteration is $O(nd)+O(n^2)$ or $O(nd)+O(d^2)$ respectively. If both $n$ and $d$ are large, then we employ the  conjugate gradient iterative method for solving the large linear systems of equations involved.
We also devise various strategies to speed up the practical performance of
the sGS-ADMM algorithm in solving large scale instances (with the largest instance
having $n=256,000$ and $d\approx 3\times 10^6$) of DWD problems
with real data sets from the UCI machine learning repository (\citealt{UCI}).
We should emphasize that the key in achieving high efficiency in our algorithm depends very much on the intricate numerical techniques and
sophisticated implementation we have developed.

\pagebreak 
\section{Generalized distance weighted discrimination}\label{sec-GeneralDWD}

This section gives details on the optimization problems underlying the distance weighted discrimination.

\begin{description}[font=\sffamily\bfseries, leftmargin=1cm, style=nextline]
  \item[Training Data]
    $(x_i,y_i)$, \emph{\textbf{\begin{scriptsize}where\end{scriptsize}}} 
    \begin{itemize}
        \begin{footnotesize}
            \item $i=1,2,3,\ldots,n$
            \item $x_i\in \R^d$ is the feature vector
        \end{footnotesize}
    \end{itemize}
  \item[Corresponding Class labels]
    $y_i\in\{-1,+1\}$
  \item[X Matrix]
  columns consisting of "$x_i$" s , \emph{\textbf{\begin{scriptsize}where\end{scriptsize}}} 
    \begin{itemize}
          \begin{footnotesize}
            \item$X\in \R^{d\times n}$
          \end{footnotesize}
    \end{itemize}
  \item[y vector] 
  $y=[y_1,\ldots,y_n]^T$
  \item[Hyperplane]
  $H= \{ x\in \R^d\mid w^T x + \beta =0 \}$,  \emph{\textbf{\begin{scriptsize}where\end{scriptsize}}} 
      \begin{itemize}
        \begin{footnotesize}
            \item$w\in \R^d$ is the unit normal 
            \item$|\beta|$ is its distance to the origin
          \end{footnotesize}
    \end{itemize}

\end{description}
In linear discrimination, we attempt to separate the vectors in the two classes by
using the above hyperplane. For binary classification where the label $y_i \in \{-1,+1\}$, we want
\begin{equation*}
y_i(\beta+x_i^Tw)\geq 1-\xi_i \quad \forall\; i=1,...,n, 
\end{equation*}
 
      \begin{itemize}\setlength\itemsep{0em}
        \begin{scriptsize}
            \item $w^Tz + \beta$ \emph{(signed distance between $z$ and the hyperplane $H$)} given a point $z\in \R^d$
            \item slack variable $\xi \geq 0$ (to allow the possibility that the positive and negative data points may not be separated cleanly  by the hyperplane)
          \end{scriptsize}
    \end{itemize}

Rewriting that into matrix-vector notation, we need
\begin{equation}
r \;:=\; Z^Tw+\beta y +\xi \;\geq\; \bfone, 
\emph{\textbf{\begin{scriptsize}where\end{scriptsize}}} 
\label{eq-r}
\end{equation}
      \begin{itemize}\setlength\itemsep{0em}
        \begin{scriptsize}
            \item $Z = X \diag{y}$
            \item $\bfone\in \mathbb{R}^n$ is the vector of ones.
          \end{scriptsize}
    \end{itemize}

For the SVM approach,
\begin{eqnarray}
\max \Big\{ \delta - C \inprod{\bfone}{\xi} \mid Z^T w + \beta y +\xi \geq \delta \bfone, \; \xi \geq 0, \; w^Tw\leq 1 \Big\}, 
\emph{\textbf{\begin{scriptsize}where\end{scriptsize}}} 
\label{eq-svm}
\end{eqnarray}

      \begin{itemize}\setlength\itemsep{0em}
        \begin{scriptsize}
            \item $w$ and $\beta$ are chosen by maximizing the minimum residual
            \item $C > 0$ is a tuning parameter to control the level of penalization on $\xi$.
          \end{scriptsize}
    \end{itemize}

For the DWD approach, \textsubscript{(introduced in \cite{MarronTodd-2007})}
\begin{eqnarray}
\min \Big\{   \sum_{i=1}^n \frac{1}{r_i} + C \inprod{\bfone}{\xi} \mid
r= Z^T w + \beta y +\xi, \; r > 0, \; \xi \geq 0, \; w^Tw\leq 1, \; w\in \R^d\Big\},
\emph{\textbf{\begin{scriptsize}where\end{scriptsize}}} 
\label{eq-dwd}
\end{eqnarray}

      \begin{itemize}\setlength\itemsep{0em}
        \begin{scriptsize}
            \item $w$ and $\beta$ are chosen by minimizing the sum of reciprocals of the $r_i$'s
            \item DWD optimization problem \eqref{eq-dwd} is shown to be equivalent to a second-order
cone programming problem \textsubscript{(in \cite{MarronTodd-2007})} and  hence it can be
solved by interior-point methods \textsubscript{(such as those implemented in the solver SDPT3 (\citealt{SDPT3}))}.
          \end{scriptsize}
    \end{itemize}

{\raggedleft\vfill{%
    \begin{FlushRight}
        \begin{spacing}{0.25}
            \begin{tiny}
                \emph{Detailed discussions on the connections between the DWD model \eqref{eq-dwd}  \\ and
                    the SVM model \eqref{eq-svm} can  be found in \cite{MarronTodd-2007}.}
                \end{tiny}
            \end{spacing}
        \end{FlushRight}
}\par
}
\pagebreak

Our approach to solve large scale generalized DWD problems:
\begin{eqnarray}
\min \Big\{ \Phi(r,\xi) := \sum_{i=1}^n \theta_q (r_i) +C \inprod{ e}{ \xi} 
\mid \; Z^T w + \beta y +\xi -r =0, \;\;
 \norm{w} \leq 1,\;\xi \geq 0
 \Big\},
 \emph{\textbf{\begin{scriptsize}where\end{scriptsize}}} 
\label{eq-primal}
\end{eqnarray}

      \begin{itemize}\setlength\itemsep{0em}
        \begin{scriptsize}
            \item In the penalty term for each $\xi_i$ we allow for a general exponent $q$ and a nonuniform weight $e_i > 0$  
            
            \item $e\in \R^n$ is a given positive vector s.t $\norm{e}_\infty =1$ \textsubscript{(the last condition is for the purpose
of normalization).}
            \item $\theta_q (r_i)$ is the function defined by
\begin{eqnarray*}
	\theta_q (t) = 
		\frac{1}{t^q} &\mbox{if $t>0$, } \;\mbox{and} \;\;\;
		\theta_q (t) =  \infty &\mbox{if $t\le0$. } ,\emph{\textbf{\begin{scriptsize}where\end{scriptsize}}} 
\end{eqnarray*}
                \begin{itemize}\setlength\itemsep{0em}
                        \begin{scriptsize}
                                \item $q\in \R^+$ s.t likely q$ \in\{0.5,1,2,4\}$
                    \end{scriptsize}
            \end{itemize}
          \end{scriptsize}
    \end{itemize}

{\raggedleft\vfill{%
    \begin{FlushRight}
        \begin{spacing}{0.25}
            \begin{tiny}
                \emph{By a simple change of variables and modification of the data vector $y$, \eqref{eq-primal} can \\ also include the
case where the terms in $\sum_{i=1}^n \frac{1}{r_i^q}$ are weighted non-uniformly.}
                \end{tiny}
            \end{spacing}
        \end{FlushRight}
}\par
}
\pagebreak

	Consider the Lagrangian function associated with \eqref{eq-primal}:
	\begin{eqnarray*}
		&& \hspace{-0.7cm}
		L(r,w,\beta,\xi;\alpha,\eta,\lam) =  \mbox{$\sum_{i=1}^n$} \theta_q (r_i) + C \inprod{e}{\xi} -\inprod{\alp}{Z^Tw+\beta y + \xi-r}
		+ \frac{\lam}{2}(\norm{w}^2-1)  - \inprod{\eta}{\xi}
		\\
		&=& \mbox{$\sum_{i=1}^n$}  \theta_q (r_i) +
		\inprod{r}{\alp } +  \inprod{\xi} {Ce -\alp-\eta}  -\beta\inprod{y}{\alp} - \inprod{w}{Z\alp} + \frac{\lam}{2}(\inprod{w}{w}-1), 
  \emph{\textbf{\begin{scriptsize}where\end{scriptsize}}} 
	\end{eqnarray*}
 
      \begin{itemize}\setlength\itemsep{0em}
        \begin{scriptsize}
            \item $r \in \R^n $
            \item $w\in \R^d$
            \item $\beta\in\R$
            \item $\xi\in\R^n$
            \item $\alp\in\R^n$
            \item $\lam, \eta \geq 0$.
          \end{scriptsize}
    \end{itemize}
    
	Now,
	\begin{eqnarray*}
		&& \inf_{r_i}\Big\{ \theta_q (r_i) + \alp_i r_i \Big\}
		= \left\{ \begin{array}{ll} \kappa\, \alp_i^{\frac{q}{q+1}}
			&\mbox{if $\alp_i \geq 0$}
			\\[0pt]
			-\infty &\mbox{if $\alp_i < 0$}
		\end{array} \right.
		\\[5pt]
		&& \inf_{w} \Big\{ -\inprod{Z\alp}{w} + \frac{\lam}{2}\norm{w}^2 \Big\}
		=
		\left\{ \begin{array}{ll}
			-\frac{1}{2\lam}\norm{Z\alp}^2 & \mbox{if $\lam > 0$} \\[0pt]
			0 &\mbox{if $\lam=0$, $Z\alp = 0$} \\[0pt]
			-\infty & \mbox{if $\lam=0$, $Z\alp\not=0$}
		\end{array} \right.
		\\[5pt]
		&& \inf_{\xi} \Big\{  \inprod{\xi} {Ce -\alp-\eta} \Big\}
		= \left\{\begin{array}{ll}
			0 &\mbox{if $Ce-\alp-\eta=0$} \\[0pt]
			-\infty &\mbox{otherwise}
		\end{array} \right.,
		\\[5pt]
		&& \inf_{\beta} \Big\{ -\beta\inprod{y}{\alp}  \Big\}
		= \left\{
		\begin{array}{ll}
			0 &\mbox{if $\inprod{y}{\alp}=0$} \\[0pt]
			-\infty &\mbox{otherwise}
		\end{array} \right..
	\end{eqnarray*}

 \pagebreak
 
	\noindent  Letting $F_D = \{ \alp\in\R^n \mid 0\leq \alp\leq Ce,  \inprod{y}{\alp}=0 \}$ :
 \\Hence,
	\begin{eqnarray*}
		\min_{r,w,\beta,\xi} L(r,w,\beta,\xi;\alpha,\eta,\lam)
		=\left\{\begin{array}{l}
			\kappa \sum_{i=1}^n \alp_i^{\frac{q}{q+1}}
			- \frac{1}{2\lam}\norm{Z\alp}^2 - \frac{\lam}{2}, \quad\mbox{if $\lam > 0$, $\alp\in F_D$},
			\\[0pt]
			\kappa \sum_{i=1}^n \alp_i^{\frac{q}{q+1}}, \quad \mbox{if $\lam =0$, $Z\alp=0$, $\alp\in F_D$},
			\\[0pt]
			-\infty, \quad \mbox{if $\lam =0$, $Z\alp\not=0$, $\alp\in F_D$, or $\alp\not\in F_D$.}
		\end{array} \right.
	\end{eqnarray*}
	Now for $\alp \in F_D$,  we have
	\begin{eqnarray*}
		& \max_{\lam \geq 0, \eta \geq 0} \big\{ \min_{r,w,\beta,\xi} L(r,w,\beta,\xi;\alpha,\eta,\lam)\big \}
		= \kappa \sum_{i=1}^n \alp_i^{\frac{q}{q+1}} -\norm{Z\alp}.
	\end{eqnarray*}
	From here, let  $\kappa = \frac{q+1}{q} q^{\frac{1}{q+1}}$ and we get the required dual problem of \eqref{eq-primal} as follows:

\begin{proposition}  
	\begin{eqnarray}
	- \min_{\alp} \Big\{ \Psi(\alp) := \norm{Z\alp} - \kappa\sum_{i=1}^n \alp_i^{\frac{q}{q+1}}
	\mid 0 \leq \alp \leq Ce , \inprod{y}{\alp} =0
	\Big\},
	\label{eq-dual}
	\end{eqnarray}
\end{proposition}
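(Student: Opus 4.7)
The plan is to complete the Lagrangian-duality computation begun in the excerpt. All four primal infima (over $r$, $w$, $\beta$, $\xi$) have already been listed; what remains is to justify the $r_i$-case in detail, to perform the outer maximisation over the multipliers $\lam \ge 0$ and $\eta \ge 0$, and to repackage the result in the sign convention used in the statement.

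First I would evaluate $\inf_{r_i}\{\theta_q(r_i) + \alp_i r_i\}$. For $\alp_i < 0$, sending $r_i \to +\infty$ drives the objective to $-\infty$, which is one of the reasons $\alp \ge 0$ appears in the dual feasible set $F_D$. For $\alp_i > 0$, the function is strictly convex on $(0,\infty)$; setting the derivative $-q\, r_i^{-q-1} + \alp_i$ to zero gives $r_i = (q/\alp_i)^{1/(q+1)}$, and substitution produces $(q+1)\, q^{-q/(q+1)}\, \alp_i^{q/(q+1)} = \kappa\, \alp_i^{q/(q+1)}$ once $\kappa$ is fixed to $\frac{q+1}{q}\, q^{1/(q+1)}$, precisely the constant chosen in the excerpt. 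The other three infima are routine: the $\xi$-step enforces $Ce - \alp - \eta = 0$, which combined with $\eta \ge 0$ yields $\alp \le Ce$; the $\beta$-step enforces $\inprod{y}{\alp} = 0$; and the $w$-step is a standard quadratic minimisation delivering $-\frac{1}{2\lam}\norm{Z\alp}^2$ when $\lam > 0$.

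Next I would maximise the resulting dual function over $\lam \ge 0$. For $\alp \in F_D$ with $Z\alp \neq 0$, the relevant one-variable problem is to maximise $g(\lam) = -\frac{\norm{Z\alp}^2}{2\lam} - \frac{\lam}{2}$ on $(0,\infty)$; this is strictly concave, and setting $g'(\lam) = \frac{\norm{Z\alp}^2}{2\lam^2} - \frac{1}{2} = 0$ yields the unique maximiser $\lam = \norm{Z\alp}$ with optimal value $-\norm{Z\alp}$. When $Z\alp = 0$, the $\lam = 0$ branch of the earlier case analysis contributes $0 = -\norm{Z\alp}$, so the two branches collapse into the single clean term $-\norm{Z\alp}$. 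Adding the $r$-contribution $\kappa \sum_i \alp_i^{q/(q+1)}$ gives the dual function $\kappa \sum_i \alp_i^{q/(q+1)} - \norm{Z\alp}$ on $F_D$; negating, as the primal is a minimisation so the dual is written in the proposition as $-\min$, produces $\Psi(\alp) = \norm{Z\alp} - \kappa\sum_i \alp_i^{q/(q+1)}$ minimised over $F_D$, which is the stated dual.

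The only genuinely computational point is the calibration of $\kappa$, which is chosen precisely so that the two additive factors arising from the $r_i$-minimisation collapse into a single term proportional to $\alp_i^{q/(q+1)}$; everything else is either a one-variable calculus exercise or a routine feasibility implication. Strong duality is not required for the statement, since the proposition only asserts the form of the dual problem and not equality of the primal and dual optimal values.
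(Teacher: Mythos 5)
Your proposal is correct and follows essentially the same route as the paper: the paper derives the dual by exactly this Lagrangian computation (the four partial infima, then the outer maximisation over $\lam\geq 0$, $\eta \geq 0$, with $\kappa = \frac{q+1}{q}q^{\frac{1}{q+1}}$ calibrated so the two terms from the $r_i$-minimisation merge), presented in the text immediately preceding the proposition. Your write-up merely fills in the one-variable calculus details (the closed form $r_i = (q/\alp_i)^{1/(q+1)}$ and the maximisation $\lam = \norm{Z\alp}$) that the paper states without computation, and both are done correctly.
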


\bigskip
As it is trivial to show that the feasible regions of \eqref{eq-primal} and \eqref{eq-dual}
both have nonempty interiors, an optimal solutions for both problems exist and they
satisfy
the following Karush-Kuhn-Tucker(KKT) optimality conditions:
\begin{eqnarray}
\begin{array}{ll}
Z^T w + \beta y + \xi - r = 0, & \inprod{y}{\alp} = 0, \\[0pt]
r > 0,\; \alp > 0, \quad \alp \leq C e, \quad \xi \geq 0,  & \inprod{Ce - \alp}{\xi} = 0, \\[0pt]
\alp_i = \frac{q}{r_i^{q+1}}, \; i=1,\ldots,n,  &
\mbox{either} \; w = \frac{Z\alp}{\norm{Z\alp}}, \;\; \mbox{or}\;\; Z\alp = 0, \; \norm{w}^2 \leq 1.
\end{array} \label{eq-optimality}
\end{eqnarray}

\pagebreak


\noindent
Let $(r^*,\xi^*,w^*,\beta^*)$ and $\alp^*$ be an optimal solution of
\eqref{eq-primal} and \eqref{eq-dual}, respectively.

\begin{proposition} 
\end{proposition}

$\makebox[\linewidth]{
\textbf{There exists a  positive $\delta$ such that $\alp^*_i \geq \delta \;\; \forall\; i=1,\ldots,n.$ }}
$
\begin{scriptsize}
$\makebox[\linewidth]{\textit{i.e, the optimal solution, $\alp^*$, is bounded away from 0.}}
$
\end{scriptsize}

\begin{proof} For convenience, let $F_P = \{ (r,\xi,w,\beta) \mid
	Z^T w +\beta y + \xi -r = 0, \norm{w} \leq 1, \xi \geq 0\}$ be the
	feasible region of \eqref{eq-primal}. Since $(\bfone,\bfone,0,0)\in F_P$, we have that
	$$
	C e_{\min} \xi^*_i \leq
	C \inprod{e}{\xi^*} \leq \Phi(r^*,\xi^*,w^*,\beta^*) \leq \Phi(\bfone,\bfone,0,0)
	= n + C \mbox{$\sum_{i=1}^n$} e_i
	\quad \forall\; i=1,\ldots,n,
	$$
	where $e_{\min} = \min_{1\leq i \leq n} \{ e_i\}$.
	Hence we have $0\leq \xi^* \leq  \varrho \bfone$, where 
	$\varrho := \frac{n + C \sum_{i=1}^{n} e_i}{C e_{\min}}$.
	
	Next, we establish a bound for $|\beta^*|$. Suppose $\beta^* > 0$.
	Consider an index $i$ such that $y_i = -1$. Then
	$
	0 < \beta^* = Z_i^T w^* + \xi^*_i-r^*_i \leq \norm{Z_i}\norm{w^*} + \xi^*_i
	\leq K + \varrho,
	$
	where $Z_i$ denotes the $i$th column of $Z$,
	$K  = \max_{1\leq j \leq n}\{\norm{Z_j}\}$. On the other hand, if
	$\beta^* < 0$, then we consider an index $k$ such that $y_k = 1$, and
	$
	0< -\beta^* = Z_k^T w^* + \xi^*_k-r^*_k \leq K + \varrho.
	$
	To summarize, we have that $|\beta^*| \leq K + \varrho$.
	
	Now we can establish an upper bound for $r^*$. For any $i=1,\ldots,n$, we have that
	$$
	r^*_i  = Z_i^T w^* + \beta^* y_i + \xi^*_i  \leq \norm{Z_i}\norm{w^*}
	+|\beta^*| + \xi_i^* \leq 2(K+\varrho).
	$$
	From here, we get
	$
	\alp^*_i = \frac{q}{ (r_i^*)^{q+1}}  \geq  \delta := \frac{q}{ (2K+2\varrho)^{q+1}} \quad\forall\;
	i=1,\ldots,n.
	$
	This completes the proof of the proposition.
\end{proof}

\pagebreak

\section{An inexact SGS-based ADMM for large scale DWD problems}\label{sec-sGSADMM}

This section dives into the methodologies and intuitions behind our proposed solver.
\\

An infinity indicator function 
over a set $\mathcal{C}$  is defined by:
\begin{eqnarray*}
	\delta_\mathcal{C}(x):=
\begin{cases}
	0, &\quad \text{if } x \in \mathcal{C}; \\
	+\infty, &\quad \text{otherwise.}
\end{cases}
\end{eqnarray*}

Rewriting the model \eqref{eq-primal} as:
\begin{eqnarray*}
	\min \Big\{
	\sum_{i=1}^n
	\theta_q (r_i)+ C \inprod{e}{\xi} +\delta_B(w) + \delta_{\R^n_+}(\xi)
	\mid \; Z^T w +\beta y+\xi-r=0, \; \; w\in \R^d, \;  r,\xi\in \R^n
	\Big\},
 \emph{\textbf{\begin{scriptsize}where\end{scriptsize}}} 
\end{eqnarray*}

      \begin{itemize}\setlength\itemsep{0em}
        \begin{scriptsize}
            \item $B=\{w \in \R^d \mid \norm{w}\leq 1\}$. 
            \item $\delta_B(w)$ and $\delta_{\R^n_+}(\xi)$ are infinity indicator functions
          \end{scriptsize}
    \end{itemize}

\noindent
The model above can be broken down into a convex
minimization problem with three nonlinear blocks.
By introducing an auxiliary variable $u=w$, we can reformulate it as:
\begin{eqnarray}
\begin{array}{ll}
\min &\sum_{i=1}^n \theta_q (r_i)+ C \inprod{e}{\xi}+\delta_B(u) + \delta_{\R^n_+}(\xi),
\emph{\textbf{\begin{scriptsize}where\end{scriptsize}}} 
\\[8pt]
\end{array}
\label{eq-gen}
\end{eqnarray}

      \begin{itemize}\setlength\itemsep{0em}
        \begin{scriptsize}
            
            \item $\beta \in \R$
            \item w,u $\in\R^d$
            \item r, $\xi \in \R^{n}$
            \item D$\in \R^{d\times d}$ \textsubscript{is a given positive scalar multiple of the identity matrix which is introduced for the purpose of scaling the variables.}
            \item D(w - u) =0
            \item $Z^T w+\beta y+\xi-r=0$
          \end{scriptsize}
    \end{itemize}

\pagebreak

The associated Lagrangian function is given by:
\begin{eqnarray*}
	\begin{array}{rcl}
		L_{\sigma}(r,w,\beta,\xi,u;\alpha,\rho) &=&\sum_{i=1}^n \theta_q (r_i)
		+ C \inprod{e}{\xi}+\delta_B(u) + \delta_{\R^n_+}(\xi)
		+\frac{\sigma}{2}\norm{Z^T w+\beta y +\xi -r -\sigma^{-1}\alpha}^2
		\\[8pt]
		&& +\frac{\sigma}{2}\norm{D(w-u)-\sigma^{-1}\rho}^2
		-\frac{1}{2\sigma}\norm{ \alpha}^2 -\frac{1}{2 \sigma}\norm{\rho}^2,
  \emph{\textbf{\begin{scriptsize}where\end{scriptsize}}} 
	\end{array}
\end{eqnarray*}

      \begin{itemize}\setlength\itemsep{0em}
        \begin{scriptsize}
            \item given parameter $\sig > 0$
          \end{scriptsize}
    \end{itemize}

The algorithm which we will design later is based on recent progress
in algorithms for solving multi-block convex conic programming.
In particular, our algorithm is designed based on
the inexact ADMM algorithm in \cite{CST} and we made essential use of
the inexact symmetric Gauss-Seidel
decomposition theorem in \cite{LST} to solve the subproblems
arising in each iteration of the algorithm.

We can view \eqref{eq-gen} as a linearly constrained nonsmooth
convex
programming problem with three blocks of variables grouped as
$(w,\beta)$, $r$, $(u,\xi)$.
The template for our inexact
sGS based ADMM is described next.
Note that the
subproblems need not be solved exactly as long as they satisfy some prescribed accuracy.

\begin{description}
	\item[Algorithm 1.] {\bf An inexact sGS-ADMM for solving \eqref{eq-gen}.}
	\\[5pt]
	Let $\{ \varepsilon_k\}$ be a  summable sequence of nonnegative
	nonincreasing numbers.
	Given an initial iterate $(r^0,w^0,\beta^0,\xi^0,u^0)$ in the feasible region of \eqref{eq-gen}, and
	$(\alpha^0,\rho^0)$ in the dual feasible region of \eqref{eq-gen}, choose  a $d\times d$ symmetric positive semidefinite matrix $\cT$, and
	perform the
	following steps in each iteration.
	\item[Step 1a.]
	Compute
	$$
	(\bar{w}^{k+1},\bar{\beta}^{k+1})
	\approx  \mbox{argmin}_{w,\beta}\; \Big\{L_\sigma (r^{k},w,\beta,\xi^k,u^k;\alpha^k,\rho^k)
	+ \frac{\sig}{2}\norm{w-w^k}_{\cT}^2 \Big\}.
	$$

	In particular, $(\bar{w}^{k+1},\bar{\beta}^{k+1})$ is an approximate solution to the
	following $(d+1)\times (d+1)$ linear system of equations:
	\begin{eqnarray}
	\underbrace{\begin{bmatrix} ZZ^T+ D^2 +\cT & Zy\\[5pt] (Zy)^T& y^Ty
		\end{bmatrix} }_{A}
	\begin{bmatrix} w \\[5pt] \beta
	\end{bmatrix}
	=  \bar{h}^k :=
	\begin{bmatrix}
	-Z(\xi^k-r^k-\sigma^{-1}\alpha^k)+D^2 u^k + D(\sigma^{-1}\rho^k) + \cT w^k\\[5pt]
	-y^T(\xi^k-r^k-\sigma^{-1}\alpha^k)
	\end{bmatrix}. \quad
	\label{eq-linsys}
	\end{eqnarray}
	We require the residual of
	the approximate solution $(\bar{w}^{k+1},\bar{\beta}^{k+1})$
	to satisfy
	\begin{eqnarray}
	\norm{\bar{h}^k - A[\bar{w}^{k+1};\bar{\beta}^{k+1}]} \leq \varepsilon_k.
	\label{eq-linsys-tol}
	\end{eqnarray}
	
	\item[Step 1b.] Compute $r^{k+1}\approx\mbox{argmin}_{r\in \R^n}\; L_{\sigma}(r,\bar{w}^{k+1},\bar{\beta}^{k+1},\xi^k,u^k;\alpha^k,\rho^k)$. Specifically, by observing that the
	objective function in this subproblem is actually separable in $r_i$ for $i=1,\ldots,n$,
	we can compute $r^{k+1}_i$ as follows:
	\begin{eqnarray}
	\begin{array}{lll}
	r^{k+1}_i &\approx& \arg \min_{r_i} \Big\{ \theta_q (r_i) +\frac{\sigma}{2} \norm{r_i-c^k_i}^2 \Big\}\\
	&=& \arg \min_{r_i > 0} \Big\{  \frac{1}{r_i^q}
	+\frac{\sigma}{2} \norm{r_i-c^k_i}^2 \Big\} \quad \forall \;i=1,\ldots,n,
	\end{array} \label{eq-1c}
	\end{eqnarray}
	where $c^k=Z^T \bar{w}^{k+1}+y\bar{\beta}^{k+1}+\xi^k-\sigma^{-1}\alpha^k$.
	The details on how the above one-dimensional problems are solved
	will be given later.
	The solution $r_i^{k+1}$ is deemed to be sufficiently accurate if
	\begin{eqnarray*}
		\Big|-\frac{q}{ (r_i^{k+1})^{q+1}} + \sig (r^{k+1}_i- c^k_i) \Big|\leq \varepsilon_k/\sqrt{n}
		\quad \forall\; i=1,\ldots,n.
	\end{eqnarray*}
	\item[Step 1c.] Compute
	$$
	(w^{k+1},\beta^{k+1}) \approx \mbox{argmin}_{w,\beta}\; \Big\{
	L_{\sigma}(r^{k+1},w,\beta,\xi^k,u^k;\alpha^k,\rho^k) + \frac{\sig}{2}\norm{w-w^k}_\cT^2
	\Big\},$$
	which amounts to solving the
	linear system of equations \eqref{eq-linsys}
	but with $r^k$ in the right-hand side vector $\bar{h}^k$ replaced by $r^{k+1}$.
	Let $h^k$ be the new right-hand side vector. We require the approximate solution
	to satisfy the accuracy condition that
	$$
	\norm{h^k - A[w^{k+1};\beta^{k+1}]}\leq 5 \varepsilon_k.
	$$
	Observe that the accuracy requirement here is more relaxed than that
	stated in \eqref{eq-linsys-tol} of Step 1a. The reason for doing so is that
	one may hope to use the solution $(\bar{w}^{k+1},\bar{\beta}^{k+1})$
	computed in Step 1a as an approximate solution for the current subproblem.
	If   $(\bar{w}^{k+1},\bar{\beta}^{k+1})$ indeed satisfies the above
	accuracy condition, then one can simply
	set $(w^{k+1},\beta^{k+1}) = (\bar{w}^{k+1},\bar{\beta}^{k+1})$ and
	the cost of solving this new subproblem can be saved.
	
	\item[Step 2.]
	Compute $(u^{k+1},\xi^{k+1})= \mbox{argmin}_{u,\xi}\; L_{\sigma}(r^{k+1},w^{k+1},\beta^{k+1},\xi,u;\alpha^k,\rho^k)$. By observing that the objective function is
	actually separable in $u$ and $\xi$, we can compute $u^{k+1}$ and $\xi^{k+1}$ separately
	as follows:
	\begin{eqnarray*}
		u^{k+1}&=& \arg \min \left\{ \delta_B (u) +\frac{\sigma}{2}\norm{D(u-g^k)}^2 \right\} \;=\; \begin{cases} g^k & \mbox{if  $\norm{g^k} \leq 1$}
			\\[0pt]
			g^k/\norm{g^k} & \mbox{otherwise}
		\end{cases},
		\\
		\xi^{k+1}&=&\Pi _{\R_{+}^n}\Big( r^{k+1}  -Z^T w^{k+1} -y \beta^{k+1}
		+\sigma^{-1}\alpha^k -\sigma^{-1}C e\Big),
	\end{eqnarray*}
	where $g^k=w^{k+1}-\sigma^{-1}D^{-1}\rho^k$, and $\Pi_{\R^n_+}(\cdot)$ denotes
	the projection onto $\R^n_+$.
	\item[Step 3.] Compute
	\begin{eqnarray*}
		\alpha^{k+1}&=& \alpha^k -\tau \sigma (Z^T w^{k+1}+y\beta^{k+1}+\xi^{k+1}-r^{k+1}),\\
		\rho^{k+1}&=& \rho^k -\tau \sigma D (w^{k+1}-u^{k+1}),
	\end{eqnarray*}
	where $\tau \in (0,(1+\sqrt{5})/2)$ is the steplength which is typically
	chosen to be $1.618$.
\end{description}

In our implementation of Algorithm 1, we choose the summable sequence $\{ \varepsilon_k\}_{k\geq 0}$ to
be $\varepsilon_k = c/(k+1)^{1.5}$ where $c$ is a constant that is inversely
proportional to $\norm{Z}_F$.
Next we discuss the computational cost of Algorithm 1.
As we shall see later, the most computationally intensive steps in each iteration
of the above algorithm are in solving the linear systems of equations of the form
\eqref{eq-linsys} in Step 1a and 1c. The detailed analysis of their computational costs
will be presented in  subsection \ref{subsec-linsys}.
All the other steps can be done
in at most $O(n)$ or $O(d)$ arithmetic operations, together with the
computation of $Z^T w^{k+1}$, which costs $2dn$ operations if we do not
take advantage of any possible sparsity in $Z$.

\subsection{Convergence results}

We have the following convergence theorem for the inexact sGS-ADMM, established by Chen, Sun and Toh in \citet[Theorem 1]{CST}. This theorem guarantees the convergence of our algorithm to optimality, as a merit over the possibly non-convergent directly extended semi-proximal ADMM.

\begin{theorem}
	Suppose that the system \eqref{eq-optimality} has at least one solution. Let $\{(r^k,w^k,\beta^k,\xi^k,u^k;\alpha^k,\rho^k)\}$ be the sequence generated by the inexact sGS-ADMM in Algorithm 1. Then the sequence $\{(r^k,w^k,\beta^k,\xi^k,u^k)\}$ converges to an optimal solution of problem \eqref{eq-gen} and the sequence $\{(\alpha^k,\rho^k)\}$ converges to an optimal solution to the dual of problem \eqref{eq-gen}.
\end{theorem}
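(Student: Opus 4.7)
The plan is to recognise Algorithm 1 as a concrete instance of the inexact semi-proximal ADMM framework of \cite{CST}, coupled with the inexact symmetric Gauss--Seidel (sGS) decomposition of \cite{LST}, and then to invoke Theorem 1 of \cite{CST} directly. The first task is thus to recast problem \eqref{eq-gen} in the two-block canonical form required by that framework. I would group the variables as block $x_1 = (w,\beta,r)$ and block $x_2 = (u,\xi)$, linked by the linear constraints $Z^Tw+\beta y+\xi-r=0$ and $D(w-u)=0$. The objective decouples across these groups: on $x_1$ only $r$ contributes through $\sum_{i=1}^n \theta_q(r_i)$, while on $x_2$ one has $\delta_B(u)+\delta_{\R^n_+}(\xi)+C\langle e,\xi\rangle$. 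Each piece is proper, closed, and convex, and the constraints are linear, so \eqref{eq-gen} fits squarely into the class of linearly constrained convex composite problems covered by \cite{CST}.

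Next, I would argue that Steps 1a--1c collectively realise the joint $x_1$-update via the sGS decomposition. Since the augmented Lagrangian is quadratic in $(w,\beta)$ and separable-plus-coupled in $r$, the sweep forward-on-$(w,\beta)$, then $r$, then backward-on-$(w,\beta)$ produces the same iterate, up to an explicit positive semidefinite proximal term $\mathcal{S}_{\mathrm{sGS}}$, as the exact joint minimiser over $x_1$. The inexact sGS decomposition theorem of \cite{LST} shows that this equivalence is preserved when each sub-solve is carried out only approximately, provided the per-sweep residuals are controlled by a summable sequence. The tolerances $\varepsilon_k$ in Step 1a, $\varepsilon_k/\sqrt{n}$ componentwise in Step 1b (which aggregates in 2-norm to $\varepsilon_k$), and $5\varepsilon_k$ in Step 1c are exactly calibrated so that, after the backward sweep, the combined inexactness of the $x_1$-block update satisfies the single residual condition required by the abstract theorem. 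Step 2 is an exact update of the $x_2$-block, which presents no additional difficulty.

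Finally, I would verify the remaining hypotheses of Theorem 1 of \cite{CST}: the KKT system \eqref{eq-optimality} is assumed to possess a solution; the overall proximal operator on the $x_1$-block, namely $\mathcal{S}_{\mathrm{sGS}}+\sigma\cT$, is symmetric positive semidefinite because $\cT\succeq 0$ and $\mathcal{S}_{\mathrm{sGS}}$ is positive semidefinite by construction; the error sequence $\varepsilon_k = c/(k+1)^{1.5}$ is summable since $\sum_k (k+1)^{-1.5}<\infty$; and the dual step length $\tau\in(0,(1+\sqrt{5})/2)$ lies in the admissible range. With these checks in place, Theorem 1 of \cite{CST} yields that $(r^k,w^k,\beta^k,\xi^k,u^k)$ converges to a primal optimal solution of \eqref{eq-gen} and that $(\alpha^k,\rho^k)$ converges to a dual optimal solution, which is the stated conclusion.

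The principal obstacle is not any single deep estimate but the bookkeeping needed to show that the three separately stated residual bounds in Steps 1a--1c combine into the single inexactness criterion demanded by the abstract CST convergence theorem. This is precisely where the inexact sGS decomposition lemma of \cite{LST} does the heavy lifting, which is why the algorithm is built on top of it; once that reduction is in hand, the rest of the verification is structural.
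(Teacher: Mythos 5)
Your proposal follows essentially the same route as the paper: recast \eqref{eq-gen} as a two-block linearly constrained convex problem with blocks $(r,w,\beta)$ and $(\xi,u)$, view Steps 1a--1c as an inexact sGS realisation of the first block update, and invoke Theorem 1 of \cite{CST}. The only substantive item the paper verifies that you pass over quickly is the \emph{positive definiteness} (not mere semidefiniteness) of the block matrix $M = \bigl[\begin{smallmatrix} ZZ^T+D^2+\cT & Zy \\ (Zy)^T & y^Ty\end{smallmatrix}\bigr]$ and of $BB^*=\diag{I,D^2}$, which the paper establishes via the Schur complement lemma to meet the hypotheses of Proposition 4.2 of \cite{CST}; this is routine (it rests on $D$ being a positive multiple of the identity), so your argument is correct in substance.
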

\begin{proof} 	
	In order to apply the convergence result in \cite{CST}, we need to express \eqref{eq-gen}
	as follows:
	\begin{eqnarray}
	\min \Big\{p(r) + f(r,w,\beta)  + q(\xi,u)  + g(\xi,u)
	\mid\;
	 A_1^* r  + A_2^* [w;\beta] + B^*[\xi; u]  = 0 \Big\}	
	\end{eqnarray}
	where $p(r) = \sum_{i=1}^n \theta_q (r_i),
		\;\; f(r,w,\beta) \equiv 0,
		\quad q(\xi,u) = \delta_B(u)  + C \inprod{e}{\xi} + \delta_{\R^n_+}(\xi),  \;\; g(\xi,u)  \equiv 0,$
	\begin{eqnarray*}
		& A_1^* = \left(\begin{array}{c} -I \\[0pt] 0 \end{array} \right),\;
		A_2^* = \left(\begin{array}{cc} Z^T & y \\[0pt] D & 0 \end{array} \right),\;
		B^* = \left(\begin{array}{cc} I & 0 \\[0pt] 0 & -D \end{array} \right).&
	\end{eqnarray*}
	Next  we need to consider the following matrices:
	\begin{eqnarray*}
		\left(\begin{array}{c}
			A_1 \\[0pt] A_2
		\end{array}\right) \Big( A_1^*,\; A_2^* \Big) + \left(\begin{array}{ccc}
		0 & 0 & 0 \\[0pt] 0 & \cT & 0 \\[0pt] 0 & 0 & 0
	\end{array}\right)
	=\left( \begin{array}{ccc}
		I &  [-Z^T, -y] \\[0pt]
		[-Z^T,-y]^T & M
	\end{array}\right), \quad BB^* = \left(\begin{array}{cc}
	I & 0 \\[0pt] 0 & D^2
\end{array}\right),
\end{eqnarray*}
where
$$
M = \left( \begin{array}{cc}
ZZ^T + D^2 + \cT & Zy \\[0pt]
(Zy)^T & y^T y
\end{array}\right) \succ 0.
$$
One can show that $M$ is positive definite by using the Schur complement lemma.
With the conditions that $M\succ 0$ and $BB^*\succ 0$, the conditions
in Proposition 4.2 of \cite{CST} are satisfied, and hence the convergence of Algorithm 1 follows by using Theorem 1 in \cite{CST}.
\end{proof}

We note here that the convergence analysis  in \cite{CST} is highly nontrivial. 
But it is motivated by the
	proof for the simpler case of an
	exact semi-proximal ADMM that is available in Appendix B of the paper by
	\cite{FPST}.
	In that paper, one can see that the convergence proof is based on the descent 
	property of 
	a certain function, while the augmented Lagrangian function itself does not have
	such a descent property.

\subsection{Numerical computation of the subproblem \eqref{eq-1c} in Step 1b}

In the presentation of Algorithm 1, we have described how the subproblem in each step can be solved
except for the subproblem \eqref{eq-1c} in Step 1b. Now we discuss how it can be solved. Observe that
for each $i$, we need to solve a one-dimensional problem of the form:
\begin{eqnarray}
\min \Big\{ \varphi(s):= \frac{1}{s^q} + \frac{\sig}{2} (s-a)^2 \mid s > 0
\Big\},
\label{eq-varphi}
\end{eqnarray}
where $a$ is given. It is easy to
see that $\varphi(\cdot)$ is a convex function and it has a unique minimizer
in the domain $(0,\infty)$.
The optimality condition for \eqref{eq-varphi} is given by
$$
s- a = \frac{q\sig^{-1}}{s^{q+1}},
$$
where the unique minimizer $s^*$
is determined by the intersection of the line $s\mapsto s-a$ and the
curve $s\mapsto \frac{q \sig^{-1}}{s^{q+1}}$ for $s> 0$.
We propose to use Newton's method to find the
minimizer, and the template is given as follows. Given an initial iterate $s^0$, perform the
following iterations:
$$
s_{k+1} = s_k - \varphi'(s_k)/\varphi''(s_k) = s_k \left( \
\frac{q(q+2)\sig^{-1} + a s^{q+1}_k}{q(q+1)\sig^{-1} +  s^{q+2}_k}\right), \quad k=0,1,\ldots
$$
Since $\varphi^{\prime\prime}(s^*)  > 0$, Newton's method
would have a local quadratic convergence rate, and we would expect
it to converge in a small number of iterations, say less than $20$,
if a good initial point $s^0$ is given. In solving the subproblem
\eqref{eq-1c} in Step 1b, we always use the previous solution $r_i^k$ as the
initial point to warm-start Newton's method.
If a good initial point is not available, one can use the bisection technique to find one. In our tests, this technique was however never used.

Observe that the computational cost for solving the subproblem \eqref{eq-1c} in Step 1b is
$O(n)$ if Newton's method converges within a fixed number of iterations (say 20)
for all $i=1,\ldots,n$. Indeed, in our experiments, the average number of
Newton iterations required to solve \eqref{eq-varphi} for each of the instances is less than $10$.

\subsection{Efficient techniques to solve the linear system 
	\eqref{eq-linsys}}
\label{subsec-linsys}

Observe that in each iteration of Algorithm 1, we need to solve a $(d+1)\times (d+1)$ linear system of
equations \eqref{eq-linsys}
with the same coefficient matrix $A$. For large scale problems where $n$ and/or $d$ are large,
this step would constitute the most expensive part of the algorithm.
In order to solve such a linear system efficiently,
we design different techniques to solve it, depending on the dimensions $n$ and $d$.
We consider the following cases.

\subsubsection{The case where $d\ll n$ and $d$ is moderate (Direct Solver Method)}\label{subsec-directSolver}

This is the most straightforward case where
we set $\cT =0$, and
we solve
\eqref{eq-linsys} by computing the Cholesky factorization of the coefficient matrix $A$.
The cost of computing $A$ is
$2 n d^2$ arithmetic operations. Assuming that $A$ is stored, then we can
compute its Cholesky factorization at the cost of $O(d^3)$ operations, which
needs only to be performed once at the very beginning of Algorithm 1.
After that, whenever we need to solve the linear system
\eqref{eq-linsys}, we compute the right-hand-side vector at
the cost of $2nd$ operations and
solve two $(d+1)\times (d+1)$
triangular systems of linear equations at the cost of $2d^2$ operations.

\subsubsection{The case where $n\ll d$ and $n$ is moderate (SMW2 Solver Method)}\label{subsec-smw}
\def\hD{\widehat{D}}

In this case, we also set $\cT=0$. But solving the large $(d+1)\times (d+1)$
system of linear equations \eqref{eq-linsys} requires more thought.
In order to avoid inverting the high dimensional matrix $A$ directly,
we make use of the Sherman-Morrison-Woodbury formula to
get $A^{-1}$ by inverting a much smaller
$(n+1)\times (n+1)$ matrix
as shown in the following proposition.

\begin{proposition} The coefficient matrix $A$ can be rewritten as follows:
	\begin{eqnarray}
	A &=& \hD + U E U^T, \quad
	U = \left[\begin{array}{cc}
	Z  & 0\\ y^T  &\norm{y}
	\end{array}\right], \quad E = \diag{I_n,-1},
	\label{eq-Anew}
	\end{eqnarray}
	where $\hD = \diag{D,\norm{y}^2}$.
	It holds that
	\begin{eqnarray}
	A^{-1} &=& \hD^{-1} - \hD^{-1} U H^{-1} U^T \hD^{-1},
	\label{eq-Ainv}
	\end{eqnarray}
	where
	\begin{eqnarray}
	H = E^{-1} + U^T \hD^{-1} U =
	\left[\begin{array}{cc}
	I_n + Z^T D^{-1} Z +  yy^T/\norm{y}^2  &  y/\norm{y}\\[3pt] y^T/\norm{y}  & 0
	\end{array}\right].
	\label{eq-H}
	\end{eqnarray}
\end{proposition}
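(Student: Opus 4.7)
The plan is to establish the proposition in two parts: verify the additive rank-update decomposition $A = \hat{D} + UEU^T$ by direct block arithmetic, and then derive the inverse formula by invoking the Sherman--Morrison--Woodbury (SMW) identity, finally simplifying $H$ to match the stated explicit form.

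For the decomposition, I would fold $E$ into the right factor of $UEU^T$ and compute
\[
UEU^T \;=\; \begin{bmatrix} Z & 0 \\ y^T & -\norm{y} \end{bmatrix}
\begin{bmatrix} Z^T & y \\ 0 & \norm{y} \end{bmatrix}
\;=\; \begin{bmatrix} ZZ^T & Zy \\ (Zy)^T & \norm{y}^2 - \norm{y}^2 \end{bmatrix}
\;=\; \begin{bmatrix} ZZ^T & Zy \\ (Zy)^T & 0 \end{bmatrix},
\]
and then add $\hat{D} = \diag{D,\norm{y}^2}$ to recover the required $(d{+}1)\times(d{+}1)$ form of $A$ (recall $\cT = 0$ in this subsection, and $y^Ty = \norm{y}^2$). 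One small bookkeeping point: the symbol ``$D$'' in the statement plays the role of the positive-definite scaling block written as $D^2$ in the original definition of $A$, which is a harmless relabelling.

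For the inverse, I would note that $\hat{D}$ is invertible ($D\succ 0$ and $y\neq 0$) and that $E$ is its own inverse, so the SMW identity applies and yields directly
\[
A^{-1} \;=\; \hat{D}^{-1} - \hat{D}^{-1} U \bigl(E^{-1} + U^T\hat{D}^{-1}U\bigr)^{-1} U^T \hat{D}^{-1},
\]
which is the claimed form with $H := E^{-1} + U^T\hat{D}^{-1}U$. To verify that this $H$ agrees with the explicit matrix in \eqref{eq-H}, I would multiply out $U^T\hat{D}^{-1}U$ blockwise: the $(1,1)$ block becomes $Z^T D^{-1} Z + yy^T/\norm{y}^2$, the off-diagonal block becomes $y/\norm{y}$, and the $(2,2)$ entry becomes $1$; adding $E^{-1} = \diag{I_n,-1}$ converts the top-left block into $I_n + Z^T D^{-1} Z + yy^T/\norm{y}^2$ and cancels the $(2,2)$ entry to $0$, matching the stated $H$.

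The only step that requires more than bookkeeping, and therefore the place I expect to be the main obstacle, is confirming that $H$ is actually invertible so that the SMW formula is legitimately applicable. For this I would use a Schur complement argument: the $(1,1)$ block $I_n + Z^T D^{-1} Z + yy^T/\norm{y}^2$ is symmetric positive definite (a sum of $I_n$ with two PSD matrices), so pivoting on it produces a scalar Schur complement $0 - (y/\norm{y})^T [I_n + Z^T D^{-1} Z + yy^T/\norm{y}^2]^{-1} (y/\norm{y})$, which is strictly negative because $y \neq 0$; hence $\det H \neq 0$. Everything else reduces to the two blockwise multiplications above.
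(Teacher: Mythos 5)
Your proof is correct and follows essentially the same route as the paper, which likewise verifies the decomposition $A = \widehat{D} + UEU^T$ by direct block multiplication and then applies the Sherman--Morrison--Woodbury formula to obtain \eqref{eq-Ainv} and \eqref{eq-H}. The only additions on your side are the explicit blockwise computation of $H$, the Schur-complement check that $H$ is invertible, and the (accurate) remark about the $D$ versus $D^2$ relabelling --- details the paper omits as routine.
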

\begin{proof} It is easy to verify that \eqref{eq-Anew} holds and we omit the details.
	To get \eqref{eq-Ainv}, we only need to apply the Sherman-Morrison-Woodbury formula in \citet[p.50]{GVbook}
	to \eqref{eq-Anew} and perform some simplifications.
\end{proof}

\medskip

Note that in making use of \eqref{eq-Ainv} to compute
$A^{-1}\bar{h}^k$, we need to find $H^{-1}$. A rather cost effective way to
do so is to express $H$ as follows and use the Sherman-Morrison-Woodbury formula to find
its inverse:
\begin{eqnarray*}
	H =J+ \bar{y}\bar{y}^T, \quad
	J = diag(I_n + Z^TD^{-1} Z,-1), \quad \bar{y} = [y/\norm{y}; \; 1].
\end{eqnarray*}
With the above expression for $H$, we have that
\begin{eqnarray*}
	H^{-1} = J^{-1} - \frac{1}{1+\bar{y}^T J^{-1}\bar{y}} (J^{-1}\bar{y} ) (J^{-1}\bar{y})^T.
\end{eqnarray*}
Thus
to solve \eqref{eq-linsys}, we first compute the $n\times n$ matrix $I_n + Z^TD^{-1} Z$ in \eqref{eq-H} at
the cost of $2dn^2$ operations. Then we compute its Cholesky factorization  at the cost of $O(n^3)$ operations.
(Observe that even though we are solving a $(d+1)\times (d+1)$ linear system of equations
for which $d\gg n$, we only need to compute the Cholesky factorization of
a much smaller $n\times n$ matrix.)
Also, we need to compute $J^{-1}\bar{y}$ at the cost of
$O(n^2)$ operations by using the previously computed Cholesky factorization.
These computations only need to be performed
once at the beginning of Algorithm 1. After that, whenever we need to
solve a linear system of the form \eqref{eq-linsys},
we can compute $\bar{h}^k$ at the cost of $2nd$ operations, and then
make use of $\eqref{eq-Ainv}$ to get $A^{-1}\bar{h}^k$ by solving two $n\times n$
triangular systems of linear equations at the cost of $2n^2$ operations, and
performing two matrix-vector multiplications involving $Z$ and $Z^T$ at a
total cost of $4nd$ operations. To summarize, given the Cholesky factorization of the first diagonal block of $H$, the cost of solving \eqref{eq-linsys} via \eqref{eq-Ainv} is $6nd + 2n^2$ operations.

\subsubsection{The case where $d$ and $n$ are both large (PSQMR Iterative Solver Method)}

The purpose of introducing the proximal term $\frac{1}{2}\norm{w-w^k}_\cT^2$
in Steps 1a  and 1c is to make the computation of the solutions of the subproblems
easier.
However, one should note that adding the proximal term typically will make the algorithm converge more slowly, and the  deterioration will become worse for
larger $\norm{\cT}$. Thus in practice, one would need to strike a balance between choosing
a symmetric positive semidefinite matrix $\cT$ to make the computation easier while not slowing down
the algorithm by too much.

In our implementation, we first attempt to
solve the subproblem in Step 1a (similarly for 1c) without adding a proximal term by setting
$\cT=0$. In particular, we solve the linear system \eqref{eq-linsys}
by using
a preconditioned symmetric quasi-minimal residual (PSQMR)  iterative solver (\citealt{PSQMR})
when both $n$ and $d$ are large.
	Basically, it is a variant of the Krylov subspace method similar to the idea in GMRES (\citealt{SaadBook}). For more details on the PSQMR algorithm, the reader is referred to the appendix.	
In each step of the PSQMR solver, the main cost is in performing the matrix-vector
multiplication with the coefficient matrix $A$, which costs
$4nd$ arithmetic operations.
As the number of steps taken by an iterative solver to solve \eqref{eq-linsys}
to the required accuracy \eqref{eq-linsys-tol} is dependent on the
conditioning of   $A$, in the event that the
solver requires more than $50$ steps to solve \eqref{eq-linsys},
we would switch to
adding a suitable non-zero proximal term $\cT$ to make the subproblem in Step 1a easier to solve. 	 	

The most common and natural choice of $\cT$ to make the subproblem in Step 1a easy to solve is
to set $\cT = \lambda_{\max}I- ZZ^T$, where
$\lam_{\max}$ denotes the largest eigenvalue of $ZZ^T$. In this case
the corresponding linear system \eqref{eq-linsys} is very easy to solve. More precisely, for the
linear system in \eqref{eq-linsys}, we can first compute $\bar{\beta}^{k+1}$ via the
Schur complement equation in a single variable followed by computing $\bar{w}^k$ as follows:
\begin{eqnarray}
\begin{array}{l}
\big(y^Ty - (Zy)^T (\lam_{\max} I + D)^{-1} (Zy) \big)\beta = \bar{h}^k_{d+1}
- (Zy)^T (\lam_{\max} I + D)^{-1} \bar{h}^k_{1:d},
\\[5pt]
\bar{w}^{k+1} = (\lam_{\max}I + D)^{-1} (\bar{h}^k_{1:d} - (Zy)\bar{\beta}^{k+1}),
\end{array}
\label{eq-schur}
\end{eqnarray}
where $\bar{h}^k_{1:d}$ denotes the vector extracted from the first $d$ components
of $\bar{h}^k$.
In our implementation, we pick a $\cT$ which is less conservative
than the above natural choice as follows.
Suppose we have computed the first $\ell$ largest eigenvalues of $ZZ^T$ such that $\lam_1\geq\ldots\geq\lam_{\ell-1} > \lam_\ell$,  and their
corresponding orthonormal set of eigenvectors, $v_1,\ldots,v_\ell$.
We pick $\cT$ to be
\begin{eqnarray}
\cT = \lam_\ell I + \mbox{$\sum_{i=1}^{\ell-1}$} (\lam_i -\lam_\ell) v_i v_i^T - ZZ^T,
\label{eq-T}
\end{eqnarray}
which can be proved to be positive semidefinite by using the spectral decomposition
of $ZZ^T$. In practice, one would typically pick $\ell$ to be a small integer, say $10$,
and compute the first $\ell$ largest eigenvalues and their corresponding eigenvectors
via variants of the Lanczos method. 
The most expensive step in each iteration of the Lanczos method is a matrix-vector multiplication, which 
requires $O(d^2)$ operations. In general, the cost of computing the first few largest eigenvalues of $ZZ^T$ 
is much cheaper than that of computing the full eigenvalue decomposition.
In {\sc Matlab}, such a computation can be
done by using the routine {\tt eigs}.
To solve \eqref{eq-linsys}, we need the inverse of $ZZ^T + D+\cT$. Fortunately,
when $D = \mu I_d$, it can easily be inverted with
$$
(ZZ^T + D + \cT)^{-1} = (\mu+\lam_\ell)^{-1} I_d
+ \mbox{$\sum_{i=1}^{\ell-1}$}
\big( (\mu+\lam_i)^{-1}-(\mu+\lam_\ell)^{-1}\big) v_iv_i^T.
$$
One  can  then compute $\bar{\beta}^k$ and $\bar{w}^k$ as
in \eqref{eq-schur} with $(\lam_{\max} I + D)^{-1}$ replaced by the above
inverse.

\pagebreak

\section{Experiments and Analysis of Results}\label{Results}
In this section, we test the performance of our inexact sGS-ADMM method on several publicly available data sets. The numerical results presented in the subsequent subsections are obtained from a computer with processor specifications: DUAL AMD EPYC\textsuperscript{TM} 7763 CPU clocked @2.45GHz and 2048GB of RAM, running on a 64-bit Red Hat Enterprise Linux\textsuperscript{\textregistered} Operating System.

\subsection{Tuning the penalty parameter}

In the DWD model \eqref{eq-gen}, we see that it is important to make a suitable choice of the penalty parameter $C$.
In \cite{MarronTodd-2007}, it has been noticed that a reasonable choice for the penalty parameter when the exponent $q=1$ is a large constant divided by the square of a typical distance between the $x_i$'s, where the typical distance, $dist$, is defined as the median of the pairwise Euclidean distances between classes. We found out that in a more general case, $C$ should be inversely proportional to $dist^{q+1}$. On the other hand, we observed that a good choice of $C$ also depends on the sample size $n$ and the dimension of features $d$. In our numerical experiments, we empirically set the value of $C$ to be
$10^{q+1}\max \big\{1,\frac{10^{q-1} \log(n)\max\{1000,d\}^{\frac{1}{3}}}{dist^{q+1}}\big\}$,
where $\log(\cdot)$ is the natural logarithm.

\subsection{Scaling of data}

A technique which is very important in implementing ADMM based methods in practice to
achieve fast convergence
is the data scaling technique.
Empirically, we have observed that
it is good to scale the matrix $Z$ in \eqref{eq-gen} so that the magnitude of all the blocks in the equality constraint would be roughly the same. Here we choose the scaling factor to be $Z_{\rm scale} = \sqrt{\norm{X}_F}$, 
where $\norm{\cdot}_F$ is the Frobenius norm. Hence the optimization model in \eqref{eq-gen} becomes:
\begin{eqnarray}
\begin{array}{ll}
\min &\sum_{i=1}^n \frac{1}{r_i^q}+ C \inprod{e}{\xi}+\delta_{\widetilde{B}}(\tilde{u}) + \delta_{\R^n_+}(\xi)
\\[8pt]
\text{s.t.} & \widetilde{Z}^T \,\tilde{w}
+\beta y+\xi-r=0,\; r > 0,\\[0pt]
&  D(\tilde{w}- \tilde{u}) =0, \;\;  \tilde{w}, \tilde{u}\in\R^d, \; r,\xi\in \R^n,
\end{array}
\label{eq-gen-scale}
\end{eqnarray}
where $\widetilde{Z} = \frac{Z}{Z_{\rm scale}}$, 
$\tilde{w} = Z_{\rm scale} w$, $\tilde{u} = Z_{\rm scale} u$, and
$\widetilde{B}=\{\tilde{w} \in \R^d \mid \norm{\tilde{w}}\leq Z_{\rm scale}\}$. Therefore, if we have computed an optimal solution $(r^*,\tilde{w}^*,\beta^*,\xi^*,\tilde{u}^*)$ of
\eqref{eq-gen-scale}, then
$(r^*,Z^{-1}_{\rm scale}\tilde{w}^*,\beta^*,\xi^*,Z^{-1}_{\rm scale}\tilde{u}^*)$
would be an optimal solution of \eqref{eq-gen}.

\subsection{Stopping condition for inexact sGS-ADMM}\label{subsec-stopping}

We measure the accuracy of an approximate optimal solution $(r,w,\beta,\xi,u,\alpha,\rho)$ for \eqref{eq-gen-scale} based on the KKT optimality conditions \eqref{eq-optimality} by defining the following relative residuals:
\begin{eqnarray*}
	\arraycolsep=1.4pt\def\arraystretch{1.5}
	\begin{array}{lllll}
		\eta_{C_1} = \frac{|y^T \alp|}{1+C}, & \eta_{C_2} = \frac{|\xi^T (Ce-\alp)|}{1+C}, & \eta_{C_3} = \frac{\norm{\alp - s}^2}{1+C} \text{ with } s_i = \frac{q}{r_i^{q+1}} , \\
		\eta_{P_1} = \frac{\norm{\widetilde{Z}^T \tilde{w} + \beta y + \xi - r}}{1+C}, 
		& \eta_{P_2} = \frac{\norm{D(\tilde{w}-\tilde{u})}}{1+C}, & \eta_{P_3} = \frac{\max\{\norm{\tilde{w}}-Z_{\rm scale},0\}}{1+C} , 
		\\
		\eta_{D_1} = \frac{\norm{\min\{0,\alp\}}}{1+C}, & \eta_{D_2} = \frac{\norm{\max\{0,\alp-Ce\}}}{1+C} ,
	\end{array}
\end{eqnarray*}
where $Z_{\rm scale}$ is a scaling factor which has been discussed in the last subsection.
Additionally, we calculate the relative duality gap by:
$$
\eta_{gap} := \frac{|\primobj-\dualobj|}{1+|\primobj|+|\dualobj|},
$$
where
$\primobj = \sum_{i=1}^n \frac{1}{r_i^q}+ C \inprod{e}{\xi}, \; 
\dualobj =  \kappa\sum_{i=1}^n \alp_i^{\frac{q}{q+1}}- Z_{\rm scale}\norm{\widetilde{Z}\alp}, \text{ with } 
\kappa = \frac{q+1}{q} q^{\frac{1}{q+1}}.$
We should emphasize that although for machine learning problems, a high accuracy solution is
usually not required,  it is important however to use
the KKT optimality conditions as the stopping criterion to find a moderately
accurate solution in order to design a robust solver.

We terminate the solver when $\max\{\eta_P,\eta_D\}<10^{-5}$,  $\min\{\eta_C,\eta_{gap}\}<\sqrt{10^{-5}}$, and $\max\{\eta_C,\eta_{gap}\} < 0.05$.
Here, $\eta_C = \max\{\eta_{C_1},\eta_{C_2},\eta_{C_3}\}, \; \eta_P = \max\{\eta_{P_1},\eta_{P_2},\eta_{P_3}\}$, and $ \eta_D = \max\{\eta_{D_1},\eta_{D_2}\}$.
Furthermore, the maximum number of iterations is set to be 2000.

\subsection{Adjustment of Lagrangian parameter $\sigma$ }

Based upon some preliminary experiments, we set our initial Lagrangian parameter $\sig$ to be $\sigma_0=\min\{10C, n\}^q$, where $q$ is the exponent in \eqref{eq-gen},
and adapt the following strategy to update $\sigma$ to improve the convergence speed of the algorithm in practice:

\begin{description}
	\item[Step 1.] Set $\chi=\frac{\eta_P}{\eta_D}$, where $\eta_P$ and $\eta_D$ are defined in subsection \ref{subsec-stopping};
	\item[Step 2.] If $\chi>\theta$, set $\sig_{k+1} = \zeta \sig_k$; elseif $\frac{1}{\chi}>\theta$, set $\sig_{k+1} = \frac{1}{\zeta} \sig_k$.
\end{description}
Here we empirically set $\theta$ to be 5 and $\zeta$ to be 1.1. Nevertheless, if we have either $\eta_P\ll\eta_D$ or $\eta_D\ll\eta_P$, then we would increase $\zeta$ accordingly, say 2.2 if $\max\{\chi,\frac{1}{\chi}\}>500$ or 1.65 if $\max\{\chi,\frac{1}{\chi}\}>50$.

\subsection{Performance of the sGS-ADMM on UCI data sets}
In this subsection, we test our algorithm on instances from the UCI data repository  (\citealt{UCI}). 
The datasets we have chosen here are all classification problems with two classes.
However, the size for each class may not be balanced. To tackle the case of uneven class proportions, we use the weighted DWD model discussed in \cite{qiao}. Specifically,
we consider the  model \eqref{eq-primal} using $e = \bfone$ and the term
$\sum_{i=1}^n 1/r_i^{q}$ is replaced by $\sum_{i=1}^n \tau_i^q/ r_i^{q}$,  with
the weights $\tau_i$  given as follows:
\begin{eqnarray*}
	\tau_i = \left\{ \begin{array}{ll}
		\frac{\tau_-}{\max\{\tau_+,\tau_-\}} & \mbox{if $y_i=+1$} \\[5pt]
		\frac{\tau_+}{\max\{\tau_+,\tau_-\}} & \mbox{if $y_i=-1$}
	\end{array}, \right.
\end{eqnarray*}
where $\tau_\pm =
\big(|n_\pm| K^{-1}\big)^{\frac{1}{1+q}}$.
Here $n_{\pm}$ is the number of data points with class label $\pm 1$ respectively and $K:={n}/{\log(n)}$ is a normalizing factor.

\noindent The performance of our inexact sGS-ADMM method on the UCI data sets:

We begin with the results of the Matlab version as a benchmark and compare it to the Vanilla python implementation to get a sense of the difference in performance. Then we present the optimised variation of the Vanilla Python implementation where we use the Numba JIT(Just in time compiler) to speed up any NumPy and SciPy calls(especially since we used quite a few of them to solve many matrix sub-problems). Following this, we present the performance of a parallel compute version of the Numba implementation. More details about the thought process behind using the following implementations and the improvements and limitations of using them will be presented in the next section(Section \ref{Trials}). 

Matlab Version : 
\begin{center}
	\spacingset{1.2}
	\begin{footnotesize}	

		\begin{longtable}{| c | c | c | c | c | c| c| c| c}
			\hline
			\multicolumn{1}{|c}{Data} & \multicolumn{1}{|c|}{$n$}
			& \multicolumn{1}{|c|}{$d$}
			& \multicolumn{1}{|c|}{$C$}
			& \multicolumn{1}{|c|}{Iter}  & \multicolumn{1}{|c|}{Time (s)}
			& \multicolumn{1}{|c|}{psqmr$|$double}
			& \multicolumn{1}{|c|}{Train-error (\%)}
			\\ \hline			
			\endhead
			\input{tableUCI_matlab.dat}
            \label{table-UCI_matlab}
            \\ \hline

		\end{longtable}
	\end{footnotesize}		
\end{center}

Vanilla Python Version : 
\begin{center}
	\spacingset{1.2}
	\begin{footnotesize}	

		\begin{longtable}{| c | c | c | c | c | c| c| c| c}
			\hline
			\multicolumn{1}{|c}{Data} & \multicolumn{1}{|c|}{$n$}
			& \multicolumn{1}{|c|}{$d$}
			& \multicolumn{1}{|c|}{$C$}
			& \multicolumn{1}{|c|}{Iter}  & \multicolumn{1}{|c|}{Time (s)}
			& \multicolumn{1}{|c|}{psqmr$|$double}
			& \multicolumn{1}{|c|}{Train-error (\%)}
			\\ \hline
			\endhead
			\input{tableUCI_pyVanilla.dat}
			\label{tableUCI_pyVanilla}
            \\ \hline
		\end{longtable}
	\end{footnotesize}		
\end{center}

Python Numba CPU Version : 
\begin{center}
	\spacingset{1.2}
	\begin{footnotesize}	

		\begin{longtable}{| c | c | c | c | c | c| c| c| c}
			\hline
			\multicolumn{1}{|c}{Data} & \multicolumn{1}{|c|}{$n$}
			& \multicolumn{1}{|c|}{$d$}
			& \multicolumn{1}{|c|}{$C$}
			& \multicolumn{1}{|c|}{Iter}  & \multicolumn{1}{|c|}{Time (s)}
			& \multicolumn{1}{|c|}{psqmr$|$double}
			& \multicolumn{1}{|c|}{Train-error (\%)}
			\\ \hline
			\endhead
			\input{tableUCI_pyNumba.dat}
			\label{table-UCI_pyNumba}
            \\ \hline
		\end{longtable}
	\end{footnotesize}		
\end{center}

\pagebreak

Python Numba Parallel Version : 
\begin{center}
	\spacingset{1.2}
	\begin{footnotesize}	

		\begin{longtable}{| c | c | c | c | c | c| c| c| c}
			\hline
			\multicolumn{1}{|c}{Data} & \multicolumn{1}{|c|}{$n$}
			& \multicolumn{1}{|c|}{$d$}
			& \multicolumn{1}{|c|}{$C$}
			& \multicolumn{1}{|c|}{Iter}  & \multicolumn{1}{|c|}{Time (s)}
			& \multicolumn{1}{|c|}{psqmr$|$double}
			& \multicolumn{1}{|c|}{Train-error (\%)}
			\\ \hline
			\endhead
			\input{tableUCI_pyNumba_parallel.dat}
			\label{tableUCI_pyNumba_parallel}
            \\ \hline
		\end{longtable}
	\end{footnotesize}		
\end{center}

Table \ref{tableUCI_pyNumba_parallel} presents the number of iterations and runtime required, as well as training error produced when we perform our inexact sGS-ADMM algorithm to solve {16} data sets. Here, the running time is the total time spent in reading the training data and in solving
the DWD model. The timing
for getting the best penalty parameter C is excluded.
The results are generated using the exponent $q=1$.
In the table, ``psqmr" is the iteration count for the preconditioned symmetric quasi-minimal residual method for solving the linear system \eqref{eq-linsys}. A `0' for ``psqmr" means that we are using a direct solver as mentioned in subsection \ref{subsec-directSolver} and \ref{subsec-smw}. {Under the column ``double" in Table \ref{tableUCI_pyNumba_parallel}, we also record the number of iterations  for which the extra Step 1c is executed to ensure the convergence of Algorithm 1.

Denote the index set $S=\{i \mid y_i [\text{sgn}(\beta+x_i^T w)]\le 0, i=1,\ldots,n\}$ for which the data instances are categorized wrongly, where $sgn(x)$ is the sign function. The training and testing errors are both defined by $\frac{|S|}{n}\times 100 \%$, where $|S|$ is the cardinality of the set $S$.}

Our algorithm is capable of solving all the data sets, even when the size of the data matrix is huge.
In addition, for data with an unbalanced class size, such as w7a, our algorithm is able to produce a classifier with small training error.


\pagebreak
\section{Insights from translating the algorithm to code}\label{Trials}

My initial language of choice to attempt to convert the algorithm to code was python as to me it was the fastest way to get from pseudo code to production. However, the process was not as easy as it seemed. After writing in base python for a period of time, I ported the writing over to use NumPy to handle matrix calculations instead. However, there was a significant slowdown as there are no direct methods in NumPy to handle sparse matrices. Considering we are using high dimensional data and the whole problem is essentially a task to manipulate and obtain insights from this HDLSS, it does not make much implementational sense to continue using python's NumPy as that means that the task will not be efficiently handled. Hence, it was a fairly natural move to port over to use SciPy given that it extends NumPy methods (all of the Numpy functions are subsumed into the SciPy namespace) and it also has the functionality to handle sparse matrices (scipy.sparse). 

The next concern was then in which how would we like to consider these sparse vectors. 
i.e: 
\begin{lstlisting}[language=Python]
#CSR (Compressed Sparse Row)
scipy.sparse.csr_matrix()
'''
    CSR (Compressed Sparse Row): similar to COO, but compresses the 
    row indices. "Row Major order".
'''
#CSC (Compressed Sparse Column)
scipy.sparse.csc_matrix()
'''
    CSC (Compressed Sparse Column): similar to CSR except that values are 
    read first in the column direction. "Column major order". 
'''
\end{lstlisting}

During my early tests, I was blindly using the csr\_matrix and had multiple \emph{'dimension mismatch'} errors during unit testing. These were primarily due to using the $* operator$ to handle multiplication functions but due to the way the sparse matrix is packed in CSR we would have to use the multiply() function provided in SciPy to properly handle pointwise multiplication. 

Later I learned the csc\_matrix is more efficient at accessing column vectors and hence superior to the csr\_matrix when it came to column operations making it a natural choice over the csr\_matrix for the sGS-ADMM algorithm implementation as the majority of functions require a great deal of column-wise operations.

For readers of this paper here is a breakdown of some common ways to store sparse matrices:

\begin{itemize}

    \item COO (Coordinate list): stores a list of --- (row, column, value) tuples
    
    \item DOK (Dictionary Of Keys): a dictionary that maps \\
    \begin{scriptsize}
        \ ${(row_{element}, column_{element}): value_{element}}$\
    \end{scriptsize}
    \\Efficient classic hash table approach to set elements.

    \item List of Lists(LIL): LIL stores one list per row. The lil\_matrix format is row-based (so conversion to CSR is more efficient (as opposed to CSC) when one wants to do operations on it)

\end{itemize}

There are many more methods and one could possibly implement an even more efficient method by using one of the sparse matrix representation methods provided over at https://docs.scipy.org/doc/scipy/reference/sparse.html

Another find was using the os.path.join() function instead of explicitly writing the seperators when specifying files in the working directories to reduce cross-operating system errors when running the python code on it due to the Windows and Unix differing choice of slash for the file directory separator. 

From the results, we can observe the vanilla python implementation was painfully slow so we ran a test using precompiled numba and later ported over to numba\_SciPy since the sparse matrix calculations were handled by SciPy instead. The results proved to be way better with significant improvements in the threaded variants where 
\begin{lstlisting}[language=Python]
@numba.vectorize(["float64(float64,float64)"],
    nopython=True,target='cpu')
\end{lstlisting}
was replaced with
\begin{lstlisting}[language=Python]
@numba.vectorize(["float64(float64,float64)"],
    nopython=True,target='parallel')
\end{lstlisting}

However, this improvement did not come without its faults. One a few runs on the test virtual machine I realised there was frequent crashing when using the Numba variant and later realised this was due to RAM limitations. Early tests on 32bit variants of Python 3.10 resulted in a memory error. This is because 32bit Python has theoretical access to approximately 4GB of RAM. In reality, this 4GB limit is actually way lower and closer to around 2GB due to 32bit operating system overheads. The first workaround is to install a 64bit version of Python on a 64bit operating system and this increases the ram limit. Alternatively, if we are limited to a 32bit host we can use numpy.memmap() function to map the array to disk and refactor our matrix handling code accordingly. We should note that the latter method is obviously going to have greater execution time overhead as increased I/O (input/output) calls due to the need to load/dump (read/write) to disk constantly to run within given memory limitations. Considering our HDLSS data sets, this is a serious issue when it comes to running on systems of low specifications or in general on systems with limited resources as unexpected failure due to memory limit exceptions is a real concern when it comes to real-world deployment.

Hence, despite the marginal speed improvement over Matlab when using the numba variant, given the current state of architecture, I would still highly recommend using Matlab for this task as you would have a better memory-usage-to-speed ratio and reliability. We have not tested this implementation using the Matlab engine inside of python or vice-versa but that is a possible future trial we could attempt. Another thing which we are interested to look into is building a version for Julia as it is known to be faster than Python and Matlab (\citealt{danielsson_lin_2022}) in the scientific computing domain given a task that requires large data sets. The next section will focus on more future work that could be done to push this research further in terms of speed and improving space consumption with a greater algorithmic focus rather than on the technological stack so as to generalise it for future technologies.

\pagebreak
\section{Future Expansions and Considerations}\label{Future}
\subsection{Psuedocode}
\noindent \textbf{Let us begin by looking at the pseudo-code of our DWD solver}:

\begin{algorithm}
	\caption{Main $f$unction} 
	\begin{algorithmic}[1]
		\For {LIBSVM data $= a,b,c,\ldots$}
				\State Convert LIBSVM data to Sparse Matrix for computation
				\State Remove zero features from Sparse Matrix
                \State Scale the resultant matrix's features to roughly have the same magnitude
                \State Compute penalty parameter
                \State Run sGS-ADMM on refactored data using parameters obtained above
		\EndFor
	\State Print Information

	\end{algorithmic} 
\end{algorithm}

\begin{algorithm}
	\caption{sGS-ADMM} 
	\begin{algorithmic}[1]
        
        \State Set initial iterates
        \If {$dim > 5000$ and $ n < 0.2*dim $ and $ n <= 2500)$} 
            \State $Solver$ = 'SMW2'
             \ElsIf {$dim > 5000$}
             \State $Solver$ = 'iterative'
             \Else
             \State $Solver$ = 'direct'
        \EndIf
        \State Apply Cholesky Decomposition based on $Solver$ \hfill //(Refer to section 3.3)
		\For {$iteration=1,2,\ldots , maxIterate$}
                        \Switch{$Solver$} \hfill //(update $\omega$,$\beta$)
                            \Case{$smw2$}
                              \State update $\omega$,$\beta$ using \emph{smw2}
                            \EndCase
                            \Case{$direct$}
                              \State update $\omega$,$\beta$ using \emph{linSysSol} \hfill //\begin{footnotesize}(linSysSol shown algo 3) \end{footnotesize}
                            \EndCase
                            \Case{$iterative$}
                              \State update $\omega$,$\beta$ using \emph{psqmr}
                            \EndCase
                        \EndSwitch

                    \State Update r \hfill //\begin{footnotesize}
                    (sub-step of which is \emph{Newton-Raphson} Root finding algorithm)
                    \end{footnotesize}
				\State Update  $\omega$,$\beta$ again  OR  directly extended ADMM //\begin{footnotesize}(depends on provided method) \end{footnotesize}
			\EndFor
			\State Check for termination
			\State Adjust $\sigma$ 

	\end{algorithmic} 
\end{algorithm}

\begin{algorithm}
	\caption{linSysSol [linearSystemSolver]} 
	\begin{algorithmic}[1]
        \State Given [L.R,indef,L.perm] = choleskyFactorisation(matrix)
        \If {L.perm exists} 
            \If {$Full Cholesky factorisation$} 
                \State $q(L.perm,1) = mextriang(L.R, mextriang(L.R,r(L.perm),2) ,1)$
                
                \ElsIf {$Sparse Cholesky factorisation$} 
                
                    \State $q(L.perm,1) = mexbwsolve(L.Rt,mexfwsolve(L.R,r(L.perm)))$
            \EndIf
            
        \Else 
            \If {$Full Cholesky factorisation$} 
                \State $q = mextriang(L.R, mextriang(L.R,r,2) ,1)$
                
                \ElsIf {$Sparse Cholesky factorisation$} 
                
                    \State $q = mexbwsolve(L.Rt,mexfwsolve(L.R,r))$
            \EndIf

        \EndIf
    
	\end{algorithmic} 
\end{algorithm}

{\raggedleft\vfill{%
    \begin{FlushRight}
        \begin{spacing}{0.25}
            \begin{tiny}
                \emph{ (note: much of the given code has been abstracted with some of the internals outright skipped to greater focus on specific parts which we would like to discuss later)}
                \end{tiny}
            \end{spacing}
        \end{FlushRight}
}\par
}
\pagebreak
\emph{\textbf{Where}}  Given upper-triangular matrix X
\begin{itemize}
    \item mextriang(X,b,options) solves $X *y = b$ given option 1 and $X'*y = b$ given option 2 
    \item mexbwsolve(transpose(X),b) solves $X*y = b$ 
    \item mexfwsolve(X,b) solves $X'*y = b$ 
\end{itemize}

\subsection{Alternate Algorithm Suggestions}
Note the following section focuses on methods to improving "\textbf{\emph{Time complexity}}" or "\textbf{\emph{Space complexity}}" or "\textbf{\emph{Memory Utilisation}}":

We could consider replacing the current Cholesky/LU/QR decomposition implementations with the one that uses Fast Rectangular Matrix Multiplication (\citealt{camerero_2018}) to push the time complexity of those operations down to \textbf{O($n^{2.529}$)}.

Alternatively, we could consider implementing Cholesky factorisation by Kullback-Liebler divergence minimization (\citealt{Schafer5_2020}) which would yield a time complexity improvement of \textbf{O($N \log(N/\epsilon)^2d$)} and a space complexity improvement of \textbf{O($N \log(N/\epsilon)^d$)}.

We could also look into implementing Broyden's method (a  quasi-Newton method) instead of Newton-Raphson for root finding task to improve memory utilisation by the algorithm as Broyden is superior when it comes to storage and approximation of the Jacobian (\citealt{ramli_abdullah_mamat_2010}).

Considering that quasi-Newton methods (\citealt{cericola_2015}) approximate the inverse Hessian Matrix, and hence, unlike full Newton-Raphson, avoid iteratively calculating the inverse Hessian:
The lack of a second derivative and requirement to solve a linear system of equations hence results in a computationally cheaper method present for us to use.
However, more convergence steps and  lack of precision in the Hessian calculation lead to slower convergence in terms of steps and hence, a less precise convergence path. In the case of simpler root-finding problems where the extra computation time to actually compute the Hessian inverse is low. Another possible issue is that to store the inverse Hessian approximation, a large amount of memory may be required for our high dimensional problems. Thus, in the vein of quasi-Newton methods, we have to carefully consider which methods would fit best for our use case.
\pagebreak
\subsection{Parallelism}
Note the following section focuses on methods to decrement "\textbf{\emph{Execution time}}" rather than improving "\textbf{\emph{Time complexity}}":
\\
\noindent Ideally, to speed up the execution time of the above algorithm we can look at parallelism methods to use more cores and threads to push for faster I/O (input/output). Considering the simple approach of increasing thread count, we can achieve a multiplicative effect on decreasing execution time as multi-threading results in greater optimisation of the CPU usage and hence a performance increase. However,  although increasing the thread count sounds like a great idea, all good things come with a price. We are, of course, in this field of optimisation cause we want to find optimality in these cost-benefit ratios. This is why I thought this would be an interesting problem to tackle. \\

\noindent Let us begin by contextualising our playing field through first understanding some underlying risks of using threading methods:

\begin{enumerate}
\item Context Switching Overhead

Although each thread's execution time decreases as the number of threads increases, there is an increase in the overhead cost caused by switching the application context between threads, and potential pre-processing and post-processing calls because of this switch. This implies resulting degradation of performance gains when there are more than the optimal number of threads.

\item  Increased Resource Consumption

Threads require some memory to operate and too many threads can bog down memory. This can cause unnecessary strain imposed on a poorly optimised scheduler which may malfunction and possibly crash(on a Linux machine one can use clone(2)(\citealt{eckhard_1992}) or similar processes to manage memory usage from thread creation to attempt to mitigate memory strain and overload from creating too many threads)
\pagebreak

\item Thread Safety

Consideration of design patterns and a complete understanding of the program flow is key to successful multithreading. The price of failure to understand the two, maybe in the case where instead of threading mutually independent tasks one tries to thread mutually dependent ones, would lead to catastrophic failure. In poorly multithreaded programs, one can face complications such as Forgotten Synchronization, Incorrect Granularity, Read and Write Tearing, Lock-Free Reordering, Lock Convoys, Two-Step Dance and Priority Inversion(\citealt{duffy_2008}). Achieving thread safety is possible via the use of immutable implementations,thread-local via private fields and local variable scoping, stateless deterministic functions, synchronization wrappers, atomic objects and functions, and locks.

\item  Complex design 

Considering all the above implications and design requirements, the maintainability and readability of source code would decay significantly due to the implementation of threading. Debugging the code base could become extremely hard and also increased risk of deadlocks due to poor implementation is of great concern (Thread Analysis tools like VizTracer for python could be used to possibly mitigate this but this leads to increased development time and increased maintenance costs)

\end{enumerate}

\noindent Considering our algorithm and the above guidelines on threadings we propose the following sub-problems that could potentially be threaded:
\begin{itemize}
  \item The first for loop in main could be threaded so each read is on a separate thread
  \item Choleksy Factorisation (using block algorithms (\citealt{Povelikin_2019}))
  \begin{itemize}
    \item There is the possibility of performantly implementing by converting the standard matrix multiplication approach in C (\citealt{boson_2014})
    \end{itemize}
   \item All the mex-matrix solvers (By extending the Algorithms-by-Blocks(\citealt{quintana-orti_geijn_zee_chan_2009}) approach)
\end{itemize}

Hence, finding an optimal thread count(\citealt{pavel_kazenin's_blog_2014}) for each of the thread-safe sub-problems would be an ideal future goal.

\spacingset{0.9}
\pagebreak
\bibliography{Bibliography-DWD}

\begin{thebibliography}{}

\bibitem[Boson, 2014]{boson_2014}
Boson, Z. (2014).
\newblock Cholesky decomposition with openmp.
\newblock
  \url{https://stackoverflow.com/questions/22479258/cholesky-decomposition-with-openmp}.

\bibitem[Camarero, 2018]{camerero_2018}
Camarero, C. (2018).
\newblock Simple, fast and practicable algorithms for cholesky, lu and qr
  decomposition using fast rectangular matrix multiplication.

\bibitem[Cericola, 2015]{cericola_2015}
Cericola, V. (2015).
\newblock Quasi-newton methods.
\newblock
  \url{https://optimization.mccormick.northwestern.edu/index.php/Quasi-Newton_methods}.

\bibitem[Chen et~al., 2016]{directADMM}
Chen, C., He, B., Ye, Y., and Yuan, X. (2016).
\newblock The direct extension of {ADMM} for multi-block convex minimization
  problems is not necessarily convergent.
\newblock {\em Mathematical Programming}, 155:57--79.

\bibitem[Chen et~al., 2017]{CST}
Chen, L., Sun, D.~F., and Toh, K.~C. (2017).
\newblock An efficient inexact symmetric gauss-seidel based majorized {ADMM}
  for high-dimensional convex composite conic programming.
\newblock {\em Mathematical Programming}, 161:237--270.

\bibitem[Danielsson and Lin, 2022]{danielsson_lin_2022}
Danielsson, J. and Lin, Y. (2022).
\newblock Choosing a numerical programming language for economic research:
  Julia, matlab, python or r.
\newblock
  \url{https://cepr.org/voxeu/columns/choosing-numerical-programming-language-economic-research-julia-matlab-python-or-r}.

\bibitem[Duffy, 2008]{duffy_2008}
Duffy, J. (2008).
\newblock Concurrency hazards: Solving problems in your multithreaded code.
\newblock
  \url{https://learn.microsoft.com/en-us/archive/msdn-magazine/2008/october/concurrency-hazards-solving-problems-in-your-multithreaded-code}.

\bibitem[Eckhard, 1992]{eckhard_1992}
Eckhard, D. (1992).
\newblock Linux manual page, clone(2).
\newblock \url{https://man7.org/linux/man-pages/man2/clone.2.html}.

\bibitem[Fazel et~al., 2013]{FPST}
Fazel, M., Pong, T.~K., Sun, D.~F., and Tseng, P. (2013).
\newblock Hankel matrix rank minimization with applications to system
  identification and realization.
\newblock {\em SIAM J. Matrix Analysis and Applications}, 34:946--977.

\bibitem[Freund, 1997]{PSQMR}
Freund, R.~W. (1997).
\newblock Preconditioning of symmetric, but highly indefinite linear systems.
\newblock In {\em Proceedings of 15th {IMACS} World Congress on Scientific
  Computation Modelling and Applied Mathematics}, pages 551 -- 556. Berlin,
  Germany.

\bibitem[Golub and Loan, 1996]{GVbook}
Golub, G.~H. and Loan, C. F.~V. (1996).
\newblock {\em Matix Computation}.
\newblock John Hopkins University Press, third edition.

\bibitem[Kazenin, 2014]{pavel_kazenin's_blog_2014}
Kazenin, P. (2014).
\newblock Optimal number of threads in parallel computing.
\newblock
  \url{https://pavelkazenin.wordpress.com/2014/08/02/optimal-number-of-threads-in-parallel-computing/}.

\bibitem[Li et~al., 2016]{LST}
Li, X.~D., Sun, D.~F., and Toh, K.~C. (2016).
\newblock A {Schur} complement based semi-proximal {ADMM} for convex quadratic
  conic programming and extensions.
\newblock {\em Mathematical Programming}, 155:333--373.

\bibitem[Lichman, 2013]{UCI}
Lichman, M. (2013).
\newblock {UCI} machine learning repository.

\bibitem[Marron et~al., 2007]{MarronTodd-2007}
Marron, J.~S., Todd, M.~J., and Ahn, J. (2007).
\newblock {Distance weighted discrimination}.
\newblock {\em Journal of the American Statistical Association},
  102:1267--1271.

\bibitem[Povelikin et~al., 2019]{Povelikin_2019}
Povelikin, R., Lebedev, S., and Meyerov, I. (2019).
\newblock Multithreaded multifrontal sparse cholesky factorization using
  threading building blocks.
\newblock In {\em Communications in Computer and Information Science}, pages
  75--86. Springer International Publishing.

\bibitem[Qiao et~al., 2010]{qiao}
Qiao, X., Zhang, H.~H., Liu, Y., Todd, M.~J., and Marron, J.~S. (2010).
\newblock Weighted distance weighted discrimination and its asymptotic
  properties.
\newblock {\em Journal of the American Statistical Association}, 105:401--414.

\bibitem[Quintana-Ortí et~al., 2009]{quintana-orti_geijn_zee_chan_2009}
Quintana-Ortí, G., Quintana-Ortí, E.~S., Geijn, R.~A., Zee, F.~G., and Chan,
  E. (2009).
\newblock Programming matrix algorithms-by-blocks for thread-level parallelism.
\newblock {\em ACM Transactions on Mathematical Software}, 36(3):1–26.

\bibitem[Ramli et~al., 2010]{ramli_abdullah_mamat_2010}
Ramli, A., Abdullah, M.~L., and Mamat, M. (2010).
\newblock Broyden's method for solving fuzzy nonlinear equations.
\newblock {\em Advances in Fuzzy Systems}, 2010:1–6.

\bibitem[Saad, 2003]{SaadBook}
Saad, Y. (2003).
\newblock {\em Iterative Methods for Sparse Linear Systems}.
\newblock Society for Industrial and Applied Mathematics, second edition.

\bibitem[Schäfer et~al., 2020]{Schafer5_2020}
Schäfer, F., Katzfuss, M., and Owhadi, H. (2020).
\newblock Sparse cholesky factorization by kullback-leibler minimization.

\bibitem[Sun et~al., 2015]{STY}
Sun, D.~F., Toh, K.~C., and Yang, L.~Q. (2015).
\newblock A convergent 3-block semi-proximal alternating direction method of
  multipliers for conic programming with 4-type constraints.
\newblock {\em SIAM Journal on Optimization}, 25:882--915.

\bibitem[Toh et~al., 1999]{SDPT3}
Toh, K.~C., Todd, M.~J., and Tutuncu, R.~H. (1999).
\newblock {SDPT3} -- a {Matlab} software package for semidefinite programming.
\newblock {\em Optimization Methods and Software}, 11:545--581.

\bibitem[Wang and Zou, 2015]{WangZou}
Wang, B. and Zou, H. (2015).
\newblock Another look at {DWD}: {Thrifty Algorithm and Bayes Risk Consistency
  in RKHS}.
\newblock {\em arXiv:1508.05913}.

\end{thebibliography}
\end{document}